\documentclass[11pt,oneside]{amsart}

\usepackage{amsmath}
\usepackage{geometry}
\usepackage{hyperref}

\theoremstyle{theorem}
\newtheorem{theorem}{Theorem}
\newtheorem{MainTheorem}{Theorem}

\newtheorem{lemma}{Lemma}

\newcounter{abc}
\newtheorem{thmPrime}[abc]{Theorem}

\theoremstyle{definition}

\theoremstyle{remark}

\title[Rigidity of convex hypersurfaces]{Rigidity of convex hypersurfaces in multidimensional spaces of constant curvature}
\author{Alexander A. Borisenko}
\thanks{\textbf{Acknowledgements.} The author would like to thank Kostiantyn Drach for the help in preparation of the manuscript and for several useful remarks.}

\date{}

\address{B. Verkin Institute for Low Temperature Physics and Engineering of the National, 47 Nauky Ave., Kharkiv, 61103, Ukraine}
\address{Brown University -- ICERM, 121 South Main Street, Box E 11th Floor, Providence, RI 02903, USA}
\email{aborisenk@gmail.com}

\begin{document}

\begin{abstract}
In 1972, E.~P.~Senkin generalized the celebrated theorem of A.~V.~Pogorelov on unique determination of compact convex surfaces by their intrinsic metrics in the Euclidean 3-space $E^3$ to higher dimensional Euclidean spaces $E^{n+1}$ under a mild assumption on the smoothness of the hypersurface. In this paper, we remove this assumption and thus establish this rigidity result for arbitrary compact, convex hypersurfaces in $E^{n+1}$, $n \ge 3$. We also prove the corresponding results in other model spaces of constant curvature.  

\medskip	

\noindent
\textbf{Keywords: } rigidity; convex hypersurface; space of constant curvature.

\medskip
\noindent
\textbf{MSC2020: } 52A10, 52A55, 51M10, 53C22.
\end{abstract}

\maketitle

In 1950, A.\,V.~Pogorelov proved the following rigidity result for compact convex surfaces in Euclidean space $E^3$. 

\begin{MainTheorem}[\cite{Pog}]
\label{Thm:Pog}
Let $F_1$ and $F_2$ be a pair of compact, convex surfaces in $E^3$ isometric with respect to their intrinsic metrics. Then there exists an isometry of the ambient Euclidean space $E^3$ that maps the surface $F_1$ onto the surface $F_2$.
\end{MainTheorem}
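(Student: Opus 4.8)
The plan is to establish congruence by first identifying the two surfaces through the intrinsic isometry $\phi \colon F_1 \to F_2$ and then showing that their extrinsic shapes must coincide. By the fundamental theorem of surface theory (Bonnet), once the first and second fundamental forms agree at corresponding points, the congruence is realized by a single rigid motion of $E^3$, so the whole problem reduces to matching the second fundamental forms. I would organize the argument into two regimes: the \emph{regular case}, where both surfaces are $C^2$ and strictly convex (positive Gaussian curvature everywhere), and the \emph{general case}, where edges, conical points, and merely $C^0$ behavior must be allowed.

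For the regular case, first I would parametrize $F_1$ and $F_2$ over a common abstract surface $M \cong S^2$ so that the first fundamental forms coincide, $\mathrm{I}_1 = \mathrm{I}_2 = g$; by Gauss's theorema egregium the Gaussian curvatures then coincide as well. The heart of the matter is an integral-geometric identity of Herglotz--Blaschke type: writing $r_1, r_2 \colon M \to E^3$ for the two position vectors and $n_1, n_2$ for the outward unit normals, I would integrate over $M$ a carefully chosen divergence expression built from the mixed second fundamental form of the pair $(r_1, r_2)$. Using Minkowski-type integral identities together with the strict convexity — which renders the relevant mixed discriminants positive definite — the boundary terms vanish because $M$ is closed, while the remaining integrand is pointwise nonnegative and vanishes exactly when the two shape operators agree. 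This forces $\mathrm{II}_1 = \mathrm{II}_2$, and Bonnet's theorem then produces the desired rigid motion carrying $F_1$ onto $F_2$.

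For the general case, the plan is to realize each convex surface as a limit of smooth strictly convex surfaces, or alternatively of convex polyhedra, and to combine Cauchy's rigidity theorem for convex polyhedra with Alexandrov's realization and gluing theory for convex metrics on the sphere. The delicate point is that isometry of the limiting surfaces does not automatically pass to the approximants, so I would instead work intrinsically: Alexandrov's theory lets one view both $F_1$ and $F_2$ as convex realizations of the common abstract metric $g$ of nonnegative curvature on $S^2$, and the rigidity reduces to the uniqueness of such a realization up to motion. Establishing that uniqueness requires a local-to-global analysis of the isometry across the (possibly dense) singular set, ruling out any nontrivial bending along edges or at conical points.

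The hard part will be exactly this last step: passing from the regular (infinitesimal) rigidity to honest global rigidity for surfaces that need not even be $C^1$. One must control the non-smooth locus, where the classical second fundamental form is undefined, and show that the synthetic Alexandrov curvature measures still obey the integral comparison that drives the regular argument. I expect the principal obstacle to be proving that the isometry cannot create a nontrivial deformation concentrated on the singular set — precisely the feature that separates Pogorelov's general theorem from the classical smooth rigidity of Cohn--Vossen.
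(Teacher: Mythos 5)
There is a genuine gap, and it sits exactly where you place it yourself. The paper does not prove Theorem~\ref{Thm:Pog}; it quotes it from Pogorelov's monograph and builds its higher-dimensional machinery (Lemmas~\ref{Lem1}--\ref{Lem3}) on the same method Pogorelov used. Measured against that method, your regular case is fine --- the Herglotz--Blaschke integral identity is precisely the classical Cohn--Vossen/Herglotz argument, which the paper explicitly attributes to them --- but your treatment of the general case is circular. You propose to view $F_1$ and $F_2$ as convex realizations of a common metric of nonnegative curvature on $S^2$ and to reduce the theorem to ``the uniqueness of such a realization up to motion.'' That uniqueness statement \emph{is} Pogorelov's theorem; Alexandrov's theory gives existence of the realization and Cauchy--Alexandrov gives uniqueness only for polyhedral metrics, and (as you correctly observe) isometry does not pass to smooth or polyhedral approximants, so neither limit argument closes the loop. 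Announcing that ``the hard part will be exactly this last step'' leaves the entire content of the theorem unproved.

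The missing ideas are the specific tools Pogorelov introduced and which this paper reuses. First, a positioning lemma (Lemma~\ref{Lem1} here, due to Senkin in the multidimensional case): after a suitable motion, either the surfaces coincide or one finds corresponding points $X_1, X_2$ and a center $Q$ from which both surfaces are visible, with the distance functions satisfying a strict inequality $r_1 < r_2$ on a neighborhood. Second, the \emph{mean surface} $R = \tfrac{1}{2}(r_1 + r_2)$ is shown to be convex near the touching point, and $\sigma = r_1 - r_2$ is an infinitesimal bending field on it satisfying $\langle dR, d\sigma\rangle = 0$ almost everywhere --- this requires no smoothness of $F_1, F_2$, only convexity, and is exactly how the singular set is controlled without ever invoking a second fundamental form. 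Third, Pogorelov's Lemma~\ref{Lem2}: the graph $z = \xi(x,y)$ of a component of the bending field has non-positive curvature in the synthetic (cut-off-a-cap) sense, whereas the inequality $r_1 < r_2$ forces $\xi$ to attain an interior minimum and hence a cap to be cut off --- a contradiction. Without some substitute for this chain (mean surface, bending field, non-positive curvature of the bending graph), your plan has no mechanism for ruling out a deformation concentrated on the singular set, which is the theorem's entire difficulty.
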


There are no regularity assumptions on the surfaces in the theorem above. Only the convexity of surfaces must be assumed. Under stronger assumptions on regularity of surfaces, Theorem~\ref{Thm:Pog} was proven by S.~Cohn-Vossen in 1924 \cite{CV} and G.~Herglotz in 1943 \cite{Her}. A.\,V.~Pogorelov generalized Theorem~\ref{Thm:Pog} for general convex surfaces in the spherical space $\mathbb S^3$. Using Pogorelov's, A.\,D.~Alexandrov's, and E.\,P.~Senkin's results, A.\,D.~Milka proved the result analogous to Theorem~\ref{Thm:Pog} in the hyperbolic (Lobachevsky) space $\mathbb H^3$. E.\,P.~Senkin generalized Pogorelov's theorem for Euclidean spaces of arbitrary dimension \cite{Sen} but with additional assumptions on regularity of hypersurfaces.

\begin{theorem}[\cite{Sen}]
\label{Thm:Sen}
Let $F_1, F_2$ be a pair of compact, convex, $C^1$-smooth hypersurfaces in a multidimensional Euclidean space $E^{n+1}$. If $F_1$ and $F_2$ are isometric with respect to their intrinsic metrics, then there exists an isometry of the ambient space $E^{n+1}$ that maps one hypersurface onto the other.
\end{theorem}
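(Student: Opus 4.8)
The plan is to reduce the statement to a pointwise comparison of the second fundamental forms and then invoke the fundamental (Bonnet) theorem of hypersurface theory. Let $g$ be the common intrinsic metric carried by $F_1$ and $F_2$ through the given isometry $\varphi\colon F_1\to F_2$, and let $h^{(1)},h^{(2)}$ denote their second fundamental forms. For a $C^1$ convex hypersurface these exist at least almost everywhere, by Alexandrov's theorem on second differentiability of convex functions, and smoothly wherever the hypersurfaces happen to be more regular. Since an isometry preserves the intrinsic Riemann curvature tensor $R$, the Gauss equation for hypersurfaces in $E^{n+1}$,
\[
R_{ijkl}=h_{ik}h_{jl}-h_{il}h_{jk},
\]
must hold for both. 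Thus $h^{(1)}$ and $h^{(2)}$ are two solutions of one and the same algebraic system determined by the intrinsically given $R$, and the heart of the matter is to decide when this system has an essentially unique solution.

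The key point, and the reason the hypothesis $n\ge 3$ is essential, is a pointwise algebraic rigidity. At a point of strict convexity the second fundamental form of a convex hypersurface is positive definite, and for $n\ge 3$ a positive-definite symmetric form is determined, up to an overall sign, by its Gauss square $h_{ik}h_{jl}-h_{il}h_{jk}$. Indeed, viewing the right-hand side as a curvature operator on bivectors, in an $h$-eigenbasis the decomposable bivectors $e_a\wedge e_b$ are eigenvectors with eigenvalues equal to the products $\kappa_a\kappa_b$ of the principal curvatures; from three mutually linked such bivectors one recovers the eigendirections, and the presence of at least three curvatures lets one solve $\kappa_a=\sqrt{(\kappa_a\kappa_b)(\kappa_a\kappa_c)/(\kappa_b\kappa_c)}$ uniquely for each $\kappa_a>0$. (For $n=2$ only the single product $\kappa_1\kappa_2=\det h$ is available, which is exactly why surface rigidity is genuinely harder and requires Pogorelov's global method.) Hence, on the open set $U$ where the Gauss--Kronecker curvature $\det h$ is positive, $h^{(1)}$ and $\varphi^{*}h^{(2)}$ agree up to sign; orienting both hypersurfaces by their outward normals makes the forms definite of the same sign, so the ambiguity is resolved and $h^{(1)}=\varphi^{*}h^{(2)}$ on each component of $U$.

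First I would dispatch the strictly convex case completely: if $U$ is all of $F_1$ (an ovaloid), then $F_1$ and $F_2$ share both fundamental forms, and Bonnet's theorem, which asserts that a hypersurface in $E^{n+1}$ is determined up to a rigid motion by its first and second fundamental forms subject to the Gauss--Codazzi equations, produces the required ambient isometry. The remaining work, and the main obstacle, is the parabolic set $Z=F_1\setminus U$ where $\det h=0$ and the Gauss equation no longer pins down $h$. Here one must exploit the global convexity together with the $C^1$ hypothesis: being $C^1$ guarantees a single-valued continuous Gauss (spherical) image, and along the null directions of $h$ on $Z$ the hypersurface is developable, a structure that is intrinsic and therefore shared by $F_1$ and $F_2$. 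The strategy is to show the two immersions still agree on $Z$ by approximating from $U$, using that the support of the curvature measure and the arrangement of the flat pieces are intrinsic data, and then to glue the congruences obtained on the separate components of $U$ across $Z$ by Pogorelov-type arguments. Carrying out this gluing rigorously while controlling the low regularity along $Z$ is where the genuine difficulty lies; it is precisely this step that the $C^1$ assumption is designed to make tractable, and that the present paper aims to remove.
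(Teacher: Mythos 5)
Your proposal follows the classical Beez--Killing route: recover the second fundamental form pointwise from the Gauss equation (possible for $n\ge 3$ at points where $h$ is definite) and then invoke Bonnet's theorem. This is a genuinely different strategy from the paper's (which is Senkin's: position $F_1,F_2$ by Lemma~\ref{Lem1} so that corresponding points satisfy $r_1<r_2$ with respect to a common viewpoint, form the mean hypersurface $R=\tfrac12(r_1+r_2)$, show that $\sigma=r_1-r_2$ is an infinitesimal bending field with $\langle dR,d\sigma\rangle=0$ a.e., cut by a $3$-dimensional subspace through the distinguished point, and contradict Pogorelov's Lemma~\ref{Lem2} on the non-positive curvature of the graph of a component of a bending field, via the cap cut off by $z=\varepsilon$). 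Unfortunately your route has a gap that is not merely technical. For a $C^1$ hypersurface neither the second fundamental form nor the intrinsic Riemann curvature tensor exists in the classical sense, and the hypothesis gives only a \emph{metric} isometry $\varphi$, which need not be even once differentiable a priori; so the assertion that ``an isometry preserves $R_{ijkl}$'' has no meaning here, and Alexandrov's a.e.\ second differentiability of a convex graph neither transfers across $\varphi$ nor feeds into the Gauss equation, which requires the immersion to be at least $C^2$. The whole point of the Pogorelov--Senkin circle of ideas is to avoid ever invoking a curvature tensor, precisely because the hypotheses do not furnish one. Your argument is essentially correct for $C^3$ (or $C^2$) strictly convex hypersurfaces, where it reproves the much older Beez--Killing/do Carmo--Warner rigidity, but it does not prove Theorem~\ref{Thm:Sen} as stated.

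Even if the Gauss-equation step were salvaged on the a.e.-defined set $U=\{\det h>0\}$, the treatment of the degenerate set $Z=F_1\setminus U$ is a statement of intent rather than an argument: you acknowledge that gluing the congruences of the components of $U$ across $Z$ ``is where the genuine difficulty lies'' and do not carry it out. Note also that for a general convex $C^1$ hypersurface $U$ need not be dense, can be disconnected into many pieces by flat and ridge regions, and the rigid motions obtained on different components of $U$ have no a priori reason to agree; controlling this is exactly the global part of the theorem, and it is the part your outline leaves open. To repair the proof you would either need to impose $C^2$ regularity and strict convexity (changing the theorem), or switch to a method, like the paper's bending-field argument, that never differentiates the isometry twice.
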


In this paper, we will prove Theorem~\ref{Thm:Sen} without the assumption on regularity of hypersurfaces. More precisely, our goal is to establish the following result: 

\begin{thmPrime}
\label{Thm:Main}
Let $F_1$ and $F_2$ be a pair of compact, convex hypersurfaces in Euclidean space $E^{n+1}$, $n \ge 3$. If $F_1$ and $F_2$ are isometric with respect to their intrinsic metrics, then there exists a motion of $E^{n+1}$ that maps $F_1$ onto $F_2$.
\end{thmPrime}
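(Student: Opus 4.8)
The plan is to deduce the general statement from the smooth case (Theorem~\ref{Thm:Sen}) by an approximation-and-stability argument. First I would reformulate the problem intrinsically: fixing an isometry $\varphi\colon F_1\to F_2$, both hypersurfaces become convex realizations (isometric embeddings as boundaries of convex bodies) of one and the same abstract metric space $(M,\rho)$, where $M$ is homeomorphic to $S^n$ and $\rho$ is an Alexandrov metric of curvature $\ge 0$. The theorem is then equivalent to the assertion that such a realization is unique up to a motion of $E^{n+1}$. The obstruction to applying Theorem~\ref{Thm:Sen} directly is that $\rho$ need not be smooth: the convex bodies bounded by $F_1,F_2$ may have edges, so $\rho$ may fail to be Riemannian along a lower-dimensional singular set.

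Next I would smooth each hypersurface. Using a standard regularization of convex bodies --- for instance, mollifying the support functions, or passing to outer parallel bodies followed by a smoothing --- I would produce $C^\infty$, strictly convex hypersurfaces $\Sigma^i_k$ with $\Sigma^i_k\to F_i$ in the Hausdorff metric as $k\to\infty$, for $i=1,2$. Strict convexity together with the Gauss equation guarantees that the induced intrinsic metric $g^i_k$ of $\Sigma^i_k$ has positive sectional curvature, so Theorem~\ref{Thm:Sen} applies to each $\Sigma^i_k$. The key approximation input is that Hausdorff convergence of closed convex hypersurfaces forces convergence of their intrinsic metrics; granting this, $g^1_k\to\rho$ and $g^2_k\to\rho$, and in particular the distance between the two smooth metrics $g^1_k$ and $g^2_k$ tends to $0$.

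I would then compare the two families through a stability estimate for the realization problem. Each $\Sigma^i_k$ is, by Theorem~\ref{Thm:Sen}, the unique convex hypersurface (up to a motion) carrying the metric $g^i_k$. A Pogorelov--Volkov-type stability result --- closeness of the intrinsic metrics implies closeness of the convex realizations up to a motion --- then yields motions $\Psi_k$ of $E^{n+1}$ with $d_H(\Psi_k(\Sigma^1_k),\Sigma^2_k)\to 0$. Because $\Sigma^1_k\to F_1$ and $\Sigma^2_k\to F_2$ are uniformly bounded, the $\Psi_k$ stay in a compact part of the group of motions and subconverge to a motion $\Psi$; passing to the limit gives $\Psi(F_1)=F_2$, which is the desired rigidity. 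In the spherical and hyperbolic model spaces the same scheme should work verbatim, with the Gauss equation and the comparison geometry adjusted to the ambient curvature.

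The hard part will be the stability step at the degenerate limit. The smooth uniqueness and the implicit-function/continuity arguments underlying stability are governed by a positive lower bound on the sectional curvature, and here that bound tends to $0$ as $k\to\infty$ while the limit surfaces acquire edges. One must therefore either establish a stability modulus that is uniform over convex metrics with curvature merely $\ge 0$ (so that $d(g^1_k,g^2_k)\to 0$ alone forces $d_H(\Psi_k(\Sigma^1_k),\Sigma^2_k)\to 0$), or run a diagonal argument balancing the smoothing scale against the rate at which the conditioning of the realization problem degenerates. Controlling this interplay, rather than the formal limiting argument, is where the real work lies.
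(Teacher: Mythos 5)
Your proposal has a genuine gap at its core, and it is not a technical detail that could be patched: the entire argument hinges on a stability theorem that does not exist and is, if anything, harder than the statement you are trying to prove. Once you mollify $F_1$ and $F_2$ separately, the smooth hypersurfaces $\Sigma^1_k$ and $\Sigma^2_k$ are no longer isometric to each other --- their intrinsic metrics are merely close --- so Theorem~\ref{Thm:Sen} (which requires an exact isometry between the pair) gives you nothing about the pair $(\Sigma^1_k,\Sigma^2_k)$. You are forced to replace it by a quantitative ``Pogorelov--Volkov-type'' estimate: closeness of intrinsic metrics of convex hypersurfaces implies closeness of the hypersurfaces up to a motion, with a modulus uniform over all convex metrics of curvature $\ge 0$ in the relevant class. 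Such a uniform modulus would contain the rigidity theorem itself as the special case of zero metric distance, so the proposal is essentially circular; and no such estimate is available in $E^{n+1}$ for $n\ge 3$ (even in $E^3$, stability versions of Pogorelov's theorem are delicate and require extra hypotheses). Your own closing paragraph correctly identifies this as ``where the real work lies,'' but that work is the theorem, not an auxiliary step. A secondary inaccuracy: strict convexity of the smoothed bodies does not give positive sectional curvature without $C^2$ control, and in any case Theorem~\ref{Thm:Sen} needs only $C^1$ and convexity, not positive curvature, so that part of the setup is doing no work.

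The paper's route is entirely different and avoids approximation. It keeps the nonsmooth hypersurfaces as they are, positions them via Senkin's Lemma~\ref{Lem1}, and studies the convex combination $R=\tfrac12(r_1+r_2)$ near a distinguished point $P_0$. The new content (Lemma~\ref{Lem3} without the $C^1$ hypothesis) is a pointwise analysis of the tangent cones of a convex hypersurface in $E^4$ --- classified as $V^{n-i}\times E^i$ --- showing via Pogorelov's rigidity theorem in $S^3$ that corresponding tangent cones are congruent and that their convex combination is again a convex cone; hence $F$ is convex near $P_0$ and $\sigma=r_1-r_2$ is an infinitesimal bending field. Pogorelov's cap lemma (Lemma~\ref{Lem2}) applied to a $2$-dimensional slice then forces $\sigma\equiv 0$. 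The higher-dimensional and non-Euclidean cases follow by induction using the Pogorelov map. If you want to salvage your idea, you would have to first prove the uniform stability estimate, which is an open problem; I would instead recommend studying the tangent-cone analysis, which is where the removal of the regularity hypothesis actually happens.
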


This theorem is proven in a sequence of steps based on the following lemmas. We say that a hypersurface $F \subset E^{n+1}$ is \emph{visible from a point $Q \in E^{n+1} \setminus F$} if for every point $P \in F$ the ray $QP$ intersect $F$ only at $P$. We will further say that a point $P$ is visible from \emph{inside} if the ray $QP$ makes the acute angle with the outer normal to the supporting hyperplane to $F$ at $P$.  
 
We will also say that a pair of hypersurfaces is congruent if there exists a motion of $E^{n+1}$ that maps one hypersurface to the other.

\begin{lemma}[\cite{Sen}]
\label{Lem1}
Let $F_1$ and $F_2$ be a pair of isometric convex hypersurfaces in $E^{n+1}$. Suppose that they are visible from the points $Q_1$ and $Q_2$, respectively. Let $L_1$ and $L_2$ be the boundaries of $F_1$, respectively $F_2$ (if the hypersurfaces are compact, then the boundaries are the points $X_1 \in F_1$ and $X_2 \in F_2$ that correspond each other under the isometry). Assume that there exist hyperplanes $P_1$ and $P_2$ passing through $Q_1$, respectively $Q_2$, such that for each $i \in \{1,2\}$, the hypersurface $F_i$ lies in one half-space with respect to the hyperplane $P_i$. If the distances from the points $Q_1$, $Q_2$ to the corresponding under the isometry points of the boundaries $L_1$, respectively $L_2$, are equal, then either the hypersurfaces $F_1$ and $F_2$ are congruent, or there exists a motion $\phi$ of $E^{n+1}$ such that:
\begin{enumerate}
\item
$\phi(X_1) = X_2$ for some pair of points $X_1 \in F_1$ and $X_2 \in F_2$ that correspond each other under the isometry of the hypersurfaces; we keep the notation $F_1$ for $\phi(F_1)$;
\item
there exits a point $Q \in E^{n+1}$ and a pair of neighborhoods $U_i$ of $X_i$ in $F_i$ such that these neighborhoods are visible from $Q$ from inside; let $r_i$ denote the distance function from $Q$ to the points in $U_i$
\item
for every corresponding under the isometry points $X \in U_1$ and $X \in U_2$, we have
\[
r_1(X) < r_2(X).  
\]\qed
\end{enumerate}
\end{lemma}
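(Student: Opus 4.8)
The plan is to use ambient motions to bring the two caps into a single normalized position sharing one observation point, and then to extract the dichotomy from a radial comparison of the caps as seen from that point; the equal-distance hypothesis is what makes the normalization possible, while convexity drives the comparison. First I would fix the boundary points: since $|Q_1X_1|=|Q_2X_2|$, there is a motion $\phi$ of $E^{n+1}$ sending $Q_1$ to $Q_2=:Q_0$ and $X_1$ to $X_2$; writing $F_1$ again for $\phi(F_1)$, this already yields conclusion (1) with the common point $X:=X_2$. The stabilizer of the pair $(Q_0,X)$ inside the group of motions is a copy of $O(n)$ acting on the hyperplane through $X$ orthogonal to the axis $Q_0X$, and for $n\ge 3$ this leaves ample rotational freedom; I would spend part of it to superpose the bounding hyperplanes $P_1$ and $P_2$ into one hyperplane $P\ni Q_0$, so that after the motion both caps lie in the same closed half-space $H$ with $\partial H=P$.

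Next I would produce the observation point and the neighborhoods of conclusion (2). Choosing $Q$ on the interior side of $P$, close to $Q_0$, and using that both caps are convex and contained in $H$, a point on the inner side lies below every outward supporting hyperplane at the cap points near $X$, so the ray $QX'$ makes an acute angle with the outward normal for all $X'$ in sufficiently small neighborhoods $U_i\subset F_i$ of $X$. This is exactly visibility from inside, and it makes each $U_i$ a single-valued radial graph $\rho_i(\omega)$ over a common domain of directions $\omega$ in the relevant hemisphere at $Q$. Because after the motion $F_1$ and $F_2$ pass through the single point $X$, any such common $Q$ gives $r_1(X)=|QX|=r_2(X)$ automatically, so the contact value already matches and no further use of the distance hypothesis is needed here.

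Finally I would compare $r_1$ and $r_2$ under the isometric correspondence. The residual rotations about the axis $Q_0X$ let me align the supporting cones of the two caps at $X$, after which convexity forces one of two alternatives: either the radial graphs coincide on a neighborhood of the direction of $X$, in which case the matched caps agree and $F_1,F_2$ are congruent; or, after possibly interchanging the labels $1$ and $2$, one cap lies radially strictly inside the other off the contact, i.e. $\rho_1(\omega)<\rho_2(\omega)$ for directions near but distinct from that of $X$. To convert the second alternative into conclusion (3) I must pass from the radial parametrization to the isometric one; since both caps pass through $X$ with matching first fundamental form, the correspondence shifts the direction of a nearby point only to second order, so the strict radial inequality survives and gives $r_1(X')<r_2(X')$ for corresponding points $X'\in U_1$, $X'\in U_2$.

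The hard part will be doing all of this with only convexity at hand. Supporting hyperplanes at $X$ need not be unique, the radial and distance functions are merely Lipschitz, and ``matching tangent data at $X$'' must be read off from supporting cones and one-sided directional derivatives rather than from a tangent plane; selecting a single rotation within the set of supporting hyperplanes that simultaneously keeps both caps in $H$ and makes the cone comparison effective is therefore the delicate step. Equally delicate is the equality alternative: promoting local radial coincidence to genuine congruence of the whole caps relies on the global star-shapedness furnished by the visibility hypothesis rather than on any purely local argument. I expect the reconciliation of these non-smooth features with the strict inequality in (3) to be the crux, the normalization of the first two paragraphs being comparatively routine.
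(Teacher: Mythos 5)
The paper itself offers no proof of this lemma: it is imported verbatim from Senkin \cite{Sen} (hence the tombstone at the end of its statement), so your argument has to stand entirely on its own. It does not. The decisive step of your third paragraph --- that convexity forces either local coincidence of the radial graphs or strict radial nesting of one cap inside the other --- is not a consequence of convexity. Two convex caps through a common point, even with coinciding supporting cones there, can cross each other in every neighborhood of the contact (two convex graphs over the same supporting hyperplane with incomparable second-order behaviour already do this), so no alignment of tangent data at the single point $X$ can produce the nesting. The entire content of the lemma is that a suitable ambient motion, found by a \emph{global} argument, produces a position in which $r_1<r_2$ holds throughout a neighbourhood; in Senkin's proof (modelled on Pogorelov's three-dimensional mixing lemma) this comes from studying the difference of the distance functions on all of $F_1\simeq F_2$, using the hypothesis that it vanishes at the corresponding boundary points to locate an interior nonzero extremum when the hypersurfaces are not congruent, and then re-basing the whole configuration at that extremal pair. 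Your proposal spends the equal-distance hypothesis on the initial normalization and then works purely locally at the boundary point, which cannot succeed: once you force $\phi(X_1)=X_2$ the two caps touch at $X$, so $r_1=r_2$ there and the strict inequality of conclusion (3) already fails at the centre of the very neighbourhoods you construct.

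There are secondary gaps as well. Superposing $P_1$ onto $P_2$ while simultaneously sending $Q_1\mapsto Q_2$ and $X_1\mapsto X_2$ requires the angle between $Q_iX_i$ and $P_i$ to be the same for $i=1,2$, which is not part of the hypotheses. The passage from the radial to the isometric parametrization (``the correspondence shifts the direction only to second order'') has no meaning for merely convex, non-smooth hypersurfaces, where there is no classical first fundamental form to match; and even granting it, a strict inequality that degenerates at the contact point is not preserved under a second-order perturbation near that point. Finally, local coincidence of the radial graphs near one point does not yield congruence of the entire hypersurfaces without a separate propagation argument, which you flag as delicate but do not supply.
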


For a general (not necessarily smooth) surface $F \subset E^3$, we say that $F$ has \emph{non-positive curvature} if for every point  on $F$ there exists its neighborhood that is impossible to cut a cup.

\begin{lemma}[\cite{Pog}, Ch.~IV, \S 2, p.213]
\label{Lem2}
Let $F$ be a $2$-dimensional convex surface in $E^3$ given in an explicit form
\[
z = z(x,y),
\]
where $x,y,z$ are some orthogonal Cartesian coordinates in $E^3$. Denote by $\xi(x,y)$ the $z$-coordinate of the infinitesimal bending field of the surface $F$, and define the surface $\Phi$ given by the equation
\[
z = \xi(x,y).
\] 

If $\Phi$ does not contain flat regions, then it has non-positive curvature everywhere. If $\Phi$ contains flat regions, then the curvature of $\Phi$ is non-positive everywhere except those flat regions. \qed
\end{lemma}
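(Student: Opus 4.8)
My plan is to reduce the geometric assertion about $\Phi$ to a pointwise sign condition on the Hessian of $\xi$ coming from the classical equation of infinitesimal bendings, and then to upgrade that condition to the synthetic (non-smooth) ``no-cup'' formulation by a maximum-principle argument.

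First, working under the provisional assumption that $z$ and the bending field are smooth, I would use the rotation-field representation of an infinitesimal bending: the displacement field $\vec{y}$ satisfies $d\vec{y} = \vec{w} \times d\vec{r}$ for a vector field $\vec{w}$. Writing $\vec{r} = (x,y,z)$ with $p = z_x$, $q = z_y$ and reading off the third components of $\vec{y}_x$ and $\vec{y}_y$ gives $w_1 = \xi_y$ and $w_2 = -\xi_x$. The compatibility condition (equivalently, that a third component $w_3$ exists, i.e. $\vec{y}_{xy}=\vec{y}_{yx}$) then forces, after the expected cancellations, the linearized Monge--Amp\`ere equation
\[
z_{yy}\,\xi_{xx} - 2 z_{xy}\,\xi_{xy} + z_{xx}\,\xi_{yy} = 0,
\]
which I would record invariantly as $\operatorname{tr}\!\big(\operatorname{cof}(\operatorname{Hess} z)\,\operatorname{Hess}\xi\big) = 0$.

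Next I would extract the curvature sign. Convexity of $F$ means $A := \operatorname{Hess} z \succeq 0$, hence its cofactor $M := \operatorname{cof}(A) \succeq 0$. If $A \succ 0$, write $M = N^2$ with $N \succ 0$ symmetric; then $0 = \operatorname{tr}(M\,\operatorname{Hess}\xi) = \operatorname{tr}\!\big(N (\operatorname{Hess}\xi) N\big)$, so the symmetric matrix $N(\operatorname{Hess}\xi)N$ is traceless and therefore has non-positive determinant. Since $\det\!\big(N(\operatorname{Hess}\xi)N\big) = (\det N)^2 \det(\operatorname{Hess}\xi)$, we get $\det \operatorname{Hess}\xi \le 0$. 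The degenerate cases ($M$ singular, or $A = 0$ on a flat part of $F$) are handled in an eigenbasis and give the same conclusion. Thus the Gaussian curvature of the graph $\Phi$, which has the sign of $\det \operatorname{Hess}\xi$, is non-positive, and it vanishes precisely where $\operatorname{Hess}\xi = 0$, i.e. on the flat regions of $\Phi$; this already produces the stated dichotomy in the regular case.

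To obtain the synthetic conclusion that one cannot cut a cup, I would argue through the maximum principle rather than through pointwise curvature. The operator $L[\eta] := \operatorname{tr}\!\big(\operatorname{cof}(\operatorname{Hess} z)\,\operatorname{Hess}\eta\big)$ is degenerate elliptic because $\operatorname{cof}(\operatorname{Hess} z) \succeq 0$. A cup cut from $\Phi$ by a plane $z = \ell(x,y)$ over a subdomain $\Omega$ corresponds to $\eta := \xi - \ell$ being of one sign in $\Omega$ and vanishing on $\partial\Omega$; as $\ell$ is affine we have $L[\eta] = L[\xi] = 0$, and the maximum principle forbids such a one-signed interior profile unless $\eta \equiv 0$. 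Hence no cup can be cut, which is exactly non-positive curvature in the sense defined above, with flat regions being the loci where $\xi$ is locally affine, as in the statement. The main obstacle is regularity: for a general convex surface $z(x,y)$ is only twice differentiable almost everywhere, $\operatorname{Hess} z$ is merely a matrix-valued measure, and the bending field (hence $\xi$) need not be $C^2$, so both the equation and the determinant inequality must be read in a generalized sense. I would resolve this by approximating $F$ by smooth strictly convex graphs $F_k \to F$, transporting the infinitesimal bendings, and passing to the limit, using that the ``no-cup'' property is closed under uniform convergence of graphs; alternatively one works directly within Pogorelov's theory of generalized solutions of the Monge--Amp\`ere equation, where $L[\xi] = 0$ and a geometric maximum principle are available for generalized surfaces. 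Making the convergence of the bending fields and the stability of the non-positive-curvature property precise is the crux of the argument.
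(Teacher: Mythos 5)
First, a point of reference: the paper does not prove Lemma~\ref{Lem2} at all --- it is quoted from Pogorelov's book (Ch.~IV, \S 2) with a \verb|\qed| in the statement and used as a black box. So the comparison here is really against Pogorelov's original argument, which is a direct geometric proof for \emph{general} convex surfaces and Lipschitz bending fields, not a regularization of the smooth computation.

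Your smooth-case computation is correct and classical: the rotation-field representation gives $w_1=\xi_y$, $w_2=-\xi_x$, the compatibility condition yields the linearized Monge--Amp\`ere equation $z_{yy}\xi_{xx}-2z_{xy}\xi_{xy}+z_{xx}\xi_{yy}=0$, and the trace identity $\operatorname{tr}\bigl(\operatorname{cof}(\operatorname{Hess}z)\operatorname{Hess}\xi\bigr)=0$ with $\operatorname{cof}(\operatorname{Hess}z)\succeq 0$ forces $\det\operatorname{Hess}\xi\le 0$. (Minor slip: $\det\operatorname{Hess}\xi=0$ does not mean $\operatorname{Hess}\xi=0$, so curvature does not vanish ``precisely'' on flat regions; but the lemma only claims non-positivity, so this is harmless.)

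The genuine gap is exactly where you locate it, and the resolution you propose does not work. The entire content of Pogorelov's lemma is the non-smooth case: for a general convex $F$ the field $\sigma$ is merely Lipschitz and $\langle dr,d\sigma\rangle=0$ holds only almost everywhere (this is precisely the setting of Lemma~\ref{Lem3} in the paper), so neither the equation nor the determinant inequality is available pointwise. Your plan to ``approximate $F$ by smooth strictly convex graphs $F_k$ and transport the infinitesimal bendings'' founders on the fact that an infinitesimal bending is a solution of a PDE rigidly tied to the surface carrying it: a bending field of $F$ does not induce, in any canonical or even existential sense, bending fields $\xi_k$ of the approximants $F_k$, so there is no sequence $\Phi_k$ whose ``no-cap'' property you could pass to the limit. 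The fallback of ``working directly within Pogorelov's theory of generalized solutions'' is essentially an appeal to the very result being proved. Separately, even in the smooth setting your maximum-principle step is not secure: $L[\eta]=\operatorname{tr}\bigl(\operatorname{cof}(\operatorname{Hess}z)\operatorname{Hess}\eta\bigr)$ is only degenerate elliptic, the coefficient matrix can have rank one away from flat regions (e.g.\ along ruled portions of $F$), and the weak maximum principle for such operators requires either a strict subsolution perturbation in a uniformly elliptic direction or an analysis along characteristics --- neither of which you supply. Pogorelov's actual proof avoids all of this by arguing synthetically with convex caps, Liberman's theorem on the behavior of the bending field along shortest arcs, and the monotonicity of the normal (gradient) image, which is why the lemma holds with no regularity hypothesis; your proposal, as written, proves only the classical smooth statement.
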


Let $F$ be the hypersurface given by the radius vector
\begin{equation}
\label{Eq:RadiusVector}
R = \frac{1}{2} (r_1 + r_2),
\end{equation}
where $r_1$ and $r_2$ are the radius vectors of $F_1$ and $F_2$ as in Lemma~\ref{Lem1}. By that lemma, for every $X$, $r_1(X) = P_1 \in F_1$ and $r_2(X) =: P_2 \in F_2$ are the pair of the corresponding under the isometry points of $F_1$, $F_2$, and $r_1(X_0) = r_2(X_0) = P_0$ for some point $P_0$ that satisfies Lemma~\ref{Lem1}. 

Under the additional assumption that the hypersurfaces $F_1$ and $F_2$ are $C^1$-smooth, the following lemma was proven:

\begin{lemma}
\label{Lem3}
The hypersurface $F$ with radius vector \eqref{Eq:RadiusVector} is a convex hypersurface in the neighborhood of the point $P_0$. For this hypersurface, the vector field $\sigma := r_1 - r_2$ is an infinitesimal bending field on the hypersurface $F$. It is Lipshitz and satisfies the equation
\[
\left<dR, d\sigma \right> = 0 \quad \text{ a.e. in the neighborhood of $P_0$.} 
\]
\end{lemma}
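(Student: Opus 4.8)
The plan is to fix a common intrinsic parameter $X$ ranging over a neighborhood $D$ of $X_0$ and to regard $r_1,r_2\colon D\to E^{n+1}$ as the two radius vectors, with $r_1(X_0)=r_2(X_0)=P_0$; the isometry hypothesis then reads as the equality of the induced first fundamental forms, $\langle dr_1,dr_1\rangle=\langle dr_2,dr_2\rangle=:ds^2$. The only regularity at our disposal is that a convex hypersurface is, near each of its points, the graph of a convex function, hence locally Lipschitz and, by Alexandrov's theorem, twice differentiable almost everywhere. The visibility-from-inside condition of Lemma~\ref{Lem1} keeps the supporting hyperplanes near $P_0$ uniformly transverse to the radial directions from $Q$, which bounds the graph slopes; consequently $r_1$ and $r_2$ are Lipschitz on $D$, and by Rademacher's theorem they, together with $R=\tfrac12(r_1+r_2)$ and $\sigma=r_1-r_2$, are differentiable almost everywhere. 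In particular $\sigma$, a difference of Lipschitz maps, is Lipschitz.

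The infinitesimal-bending assertion is then the same first-order computation as in the $C^1$ case, now read pointwise almost everywhere. Wherever the radius vectors are differentiable, $dR=\tfrac12(dr_1+dr_2)$ and $d\sigma=dr_1-dr_2$, so
\[
\langle dR,d\sigma\rangle=\tfrac12\big(\langle dr_1,dr_1\rangle-\langle dr_2,dr_2\rangle\big)=\tfrac12\,(ds^2-ds^2)=0 .
\]
Since this is precisely the vanishing of the first variation of the metric under the deformation $R_t=R+t\sigma$, the field $\sigma$ is an infinitesimal bending field of $F$, and the displayed identity holds almost everywhere near $P_0$.

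The substantive step is the local convexity of $F$. I would first put $F$ in graph form: let $N_0$ be the common outer normal to the supporting hyperplanes of $F_1$ and $F_2$ at $P_0$ — these hyperplanes coincide because $r_1<r_2$ with equality only at $P_0$ forces tangency there — and project onto the hyperplane $\pi$ orthogonal to $N_0$. Convexity and tangency make the projections of $r_1$ and $r_2$ near $X_0$ close to the identity, so the projection of $R$ is a local homeomorphism and $F$ is the graph $z=w(u)$ of a Lipschitz function $w$ over a neighborhood in $\pi$. To prove $w$ convex one expresses the almost-everywhere second fundamental form of $F$ through the Alexandrov second fundamental forms of $F_1$ and $F_2$ and the differential of the isometry, and checks that it is positive semidefinite where defined; here one uses that the two second fundamental forms are positive semidefinite by convexity and that the mixed terms $\partial_i r_1\cdot\partial_j r_2$ are controlled by the metric equality $\langle dr_1,dr_1\rangle=\langle dr_2,dr_2\rangle$ together with the Cauchy--Schwarz inequality.

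I expect the main obstacle to lie in this convexity step, for two reasons. First, the isometry correspondence does not coincide with vertical projection onto $\pi$, so $F$ is not the graph of the average of two convex functions and the mixed first-order terms must be estimated rather than dropped. Second, and more seriously, almost-everywhere positivity of the second fundamental form does not by itself yield convexity of a merely Lipschitz graph: one must show that the distributional Hessian of $w$ is a nonnegative matrix-valued measure, i.e.\ that its singular part is also nonnegative. I would handle this by controlling the Hessian measure of $w$ through those of the convex functions representing $F_1$ and $F_2$ — which are nonnegative measures — the metric equality forcing the singular parts to combine with the correct sign; mollifying and passing to the limit then gives genuine convexity of $F$ near $P_0$. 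This measure-theoretic passage from the almost-everywhere identities to a pointwise convexity statement is exactly the ingredient that the $C^1$ hypothesis made unnecessary in the original argument.
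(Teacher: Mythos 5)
Your treatment of the Lipschitz property and of the almost-everywhere identity $\langle dR,d\sigma\rangle=\tfrac12\left(\langle dr_1,dr_1\rangle-\langle dr_2,dr_2\rangle\right)=0$ is correct and is essentially what the paper does; that part of the lemma is not where the difficulty lies. The genuine gap is in the convexity step, which you have correctly located but not closed. Asserting that ``the metric equality forces the singular parts to combine with the correct sign'' is not an argument: the equality $\langle dr_1,dr_1\rangle=\langle dr_2,dr_2\rangle$ is a first-order identity holding pointwise almost everywhere, and it gives no control over the singular part of the distributional Hessian of the graph function $w$. Alexandrov second derivatives see only the absolutely continuous part, and a Lipschitz graph whose a.e.\ second fundamental form is positive semidefinite need not be convex if the singular part of the Hessian measure is signed. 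Mollification does not rescue this, since nonnegativity of the full Hessian measure is exactly what one would need in order to conclude convexity of the mollified graphs. So the proposal, as written, does not prove local convexity of $F$, and this is the one assertion of Lemma~\ref{Lem3} that actually requires a new idea once the $C^1$ hypothesis is dropped.

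The paper's route to convexity is entirely different and essentially synthetic. One classifies the tangent cones of a convex hypersurface in $E^4$ into the four types $V^3$, $V^2\times E^1$, $V^1\times E^2$, $E^3$, and proves (Lemma~\ref{Lem4}) that at corresponding points of $F_1$ and $F_2$ the tangent cones are of the same type and, in the nondegenerate cases, \emph{congruent}; the key ingredient is Pogorelov's rigidity theorem for compact convex surfaces in $S^3$ applied to the links $K(P_i)\cap S_i^3$ of the cones. Local convexity of $F$ near $P_0$ is then reduced, via sequences of corresponding points converging to $P_0$, to the convexity of the sum $K_1(X_1^n)+K_2(X_2^n)$ of nearly congruent convex cones; this is established by producing a supporting hyperplane through each generator and integrating the Liberman-type inequality $\frac{d}{ds}\langle r_1+r_2,\bar n\rangle\ge 0$ along shortest lines, following Pogorelov's Lemma~1. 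To complete your write-up you would need to replace the distributional-Hessian step by this cone-rigidity argument (or supply an actual proof that the singular part of the Hessian measure is nonnegative, which I do not believe can be extracted from the a.e.\ isometry identity alone).
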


Let us define 
\[
E^3 := \text{span}(e_1, e_2, n),
\]
where $e_1, e_2$ are tangent vectors to $F$ at $P_0$, and $n$ is the normal vector at this point. The intersection $F \cap E^3 =: F^2$ is a compact convex surface in $E^3$. We will now work in the subspace $E^3$. In the neighborhood of the point $P_0$ the surface $F^2$ is given in the explicit form $z = z(x,y)$ and $z = \xi(x,y)$ is the $z$-coordinate of the infinitesimal bending field along the surface $F^2$. At $P_0$, the function $z = \xi(x,y)$ assumes its minimum. The plane $z = \varepsilon$, $\varepsilon > 0$, cuts from the surface $z = \xi(x,y)$ a cap for small $|\varepsilon|$. It contradicts Pogorelov's Lemma~\ref{Lem2}. Therefore, $r_1 = r_2$ and the hypersurfaces $F_1$ and $F_2$ coincide.

Now we prove Lemma~\ref{Lem3} without additional assumption of $C^1$ regularity of the hypersurfaces $F_1$ and $F_2$. We will require only that $F_1$ and $F_2$ are compact, isometric, convex hypersurfaces.

\section{Convex combination of isometric hypersurfaces}

In this section, we discuss some facts about convex combinations of convex hypersurfaces in $E^4$. 

At every point of a convex hypersurface in $E^4$ there exists the tangent cone. Such a cone is a convex hypersurface as well. Let $V^n$ be a strongly convex cone in the Euclidean space $E^{n+1}$, where a convex cone is called \emph{strongly convex} if at the vertex $O$ of the cone there exists a supporting hyperplane that intersects the cone only at $O$.

It is well-known that tangent cones $V^3$ at points of a convex hypersurface $F^3 \subset E^4$ have one of the following forms:
\begin{enumerate}
\item
\label{It:Form1}
$V^3$ is a strongly convex cone in $E^4$;
\item
\label{It:Form2}
$V^3 = V^2 \times E^1$ is a metric product of a strongly convex cone $V^2$ in $E^3$ and a Euclidean line $E^1$;
\item
\label{It:Form3}
$V^3 = V^1 \times E^2$ is a metric product of a strongly convex cone $V^1$ in $E^2$ and a Euclidean plane $E^2$;
\item
\label{It:Form4}
$V^3 = E^3$ is a Euclidean space $E^3$.
\end{enumerate}

If $P_1 \in F_1$, $P_2 \in F_2$ are the corresponding points in the convex isometric hypersurfaces $F_1$ and $F_2$, then the tangent cones these points are isometric too.

\begin{lemma}
\label{Lem4}
Let $F_1$ and $F_2$ be a pair of convex isometric hypersurfaces in $E^4$. 

I. Suppose that the tangent cone $K(P_1)$ at a point $P_1 \in F_1$ has the form \eqref{It:Form1}. Then for the corresponding under the isometry point $P_2 \in K_2$ the tangent cone $K(P_2)$ also has the form \eqref{It:Form1} and the cones $K(P_1)$, $K(P_2)$ are congruent.

II. If the cone $K(P_1)$ has the form \eqref{It:Form2}, i.e., $K(P_1) = V_1^2 \times E^1_1$, then $K(P_2)$ has the same from $K(P_2) = V_2^2 \times E^1_2$ and the cones $V_1^2, V_2^2$ are congruent. The edges $E^1_1, E^1_2$ correspond under the isometry of $K(P_1)$ and $K(P_2)$.
\end{lemma}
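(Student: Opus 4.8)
The plan is to argue entirely with the intrinsic metrics of the tangent cones, using the fact recorded above that the isometry between $F_1$ and $F_2$ restricts to an intrinsic isometry $\phi\colon K(P_1)\to K(P_2)$. The first point is that each of the forms \eqref{It:Form1}--\eqref{It:Form4} is a Euclidean metric cone $C(\Sigma)$ over its link $\Sigma$, the intersection of the cone with a small sphere about the vertex; $\Sigma$ is a closed convex surface in $\mathbb S^3$, and the Gauss equation gives $K_\Sigma\ge 1$, so $C(\Sigma)$ is an Alexandrov space of nonnegative curvature. The decisive invariant is the maximal Euclidean (de~Rham) factor of this metric, which is $\mathbb R^0,\mathbb R^1,\mathbb R^2,\mathbb R^3$ in the four cases respectively, since a strongly convex cone contains no line. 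As an isometry of nonnegatively curved spaces carries the maximal Euclidean factor onto the maximal Euclidean factor (splitting theorem), $\phi$ preserves the form: if $K(P_1)$ has form \eqref{It:Form1} then so does $K(P_2)$, and if $K(P_1)=V_1^2\times E_1^1$ then $K(P_2)=V_2^2\times E_2^1$ with $\phi$ mapping $E_1^1$ isometrically onto $E_2^1$, which is the assertion that the edges correspond.

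For Part~I it remains to upgrade the intrinsic isometry of the two strongly convex cones to an ambient congruence. A metric cone with trivial Euclidean factor has a unique apex --- two apexes would produce a line and hence a Euclidean factor --- so $\phi$ sends the vertex of $K(P_1)$ to that of $K(P_2)$ and descends to an intrinsic isometry of the links $\Sigma_1,\Sigma_2$. These are closed convex surfaces in an open hemisphere of $\mathbb S^3$, so Pogorelov's rigidity theorem for convex surfaces in spherical space provides a motion of $\mathbb S^3$ carrying $\Sigma_1$ onto $\Sigma_2$; coning this motion off from the common vertex yields a motion of $E^4$ identifying $K(P_1)$ with $K(P_2)$, the required congruence.

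For Part~II the splitting has already matched the edges; the complementary factors give an intrinsic isometry of the two-dimensional cross-sectional cones $V_1^2,V_2^2\subset E^3$. Each $V_i^2$ is a flat cone whose intrinsic metric is determined by the single number $\mathrm{length}(\gamma_i)$, the total angle of its link curve $\gamma_i\subset\mathbb S^2$, so the isometry forces $\mathrm{length}(\gamma_1)=\mathrm{length}(\gamma_2)$ and identifies $V_1^2$ with $V_2^2$ intrinsically.

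The main obstacle is this final congruence step, and it is genuinely different in the two parts. In Part~I it closes cleanly because the links are bona fide convex surfaces in $\mathbb S^3$ and the two-dimensional spherical rigidity theorem applies to them directly. In Part~II no analogous two-dimensional rigidity is available: a flat cone of prescribed total angle is realized by many pairwise noncongruent convex cones in $E^3$, so the congruence of $V_1^2$ and $V_2^2$ cannot be deduced from the intrinsic isometry alone and must be extracted from the convexity together with the way these cones are embedded in the isometric hypersurfaces $F_1$ and $F_2$. Carrying out this extraction, and verifying that the Euclidean de~Rham factor is honestly unique for the possibly non-smooth cones occurring here, is where I would concentrate the effort.
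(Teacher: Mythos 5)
Your Part~I is essentially correct, though it reaches the first assertion by a different route than the paper. Where you invoke the splitting theorem and the maximal Euclidean factor, the paper argues directly by contradiction: if $K(P_2)$ had one of the forms \eqref{It:Form2}--\eqref{It:Form4}, it would contain a straight segment $\gamma_2$ with $P_2$ in its interior; the corresponding shortest line $\gamma_1\subset K(P_1)$ then passes through the apex $P_1$ and breaks into two straight rays, and slicing $K(P_1)$ by the $3$-dimensional subspace spanned by these rays and an interior ray produces a strongly convex cone in $E^3$ carrying a shortest path through its vertex --- impossible, since the total angle there is less than $2\pi$. Note that your one-line justification ``a strongly convex cone contains no line'' is only obvious for \emph{straight} lines; what the splitting theorem needs is the absence of intrinsic geodesic lines (equivalently, that the link has intrinsic diameter less than $\pi$), and proving that is essentially the paper's slicing argument. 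So the two routes are closer than they look, and yours carries the extra burden of the de~Rham uniqueness statement for non-smooth cones, which you yourself flag as unverified. The congruence step of Part~I is identical in both: the links are compact isometric convex surfaces in open hemispheres of $S^3$, and Theorem~\ref{Thm:Pog2} applies.

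The genuine gap is Part~II, which you do not prove and explicitly say you cannot close. Your diagnosis is sharp and correct: the intrinsic isometry of $V_1^2\times E_1^1$ and $V_2^2\times E_2^1$ determines only the total angle of $V_i^2$ at its vertex, and flat cones of equal total angle are realized by many pairwise non-congruent convex cones in $E^3$, so congruence of $V_1^2$ and $V_2^2$ cannot follow from the intrinsic data of the tangent cones alone. The paper disposes of Part~II with the single sentence ``the proof is similar to the proof of Part~I,'' but the analogy is not automatic: for a cone of form \eqref{It:Form2} the link $K(P_i)\cap S_i^3$ is a spherical suspension containing a pair of antipodal points, hence not contained in any open hemisphere, and Theorem~\ref{Thm:Pog2} cannot be applied to it verbatim (isometric suspensions over non-congruent convex curves of equal length in $S^2$ are isometric but not congruent). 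Whatever supplies the congruence in Part~II must therefore use more than the tangent cones --- for instance the segment through $P_i$ in the direction $E_i^1$ that $F_i$ actually contains, or the comparison of supporting cones from Lemma~\ref{Lem1} that the paper later exploits in case~II of its case analysis. Since your proposal stops exactly where this extra input is needed, it is an incomplete proof of the lemma as stated, even though the point at which it stops is a fair criticism of the paper's own terseness.
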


\begin{proof}
\noindent
I. Suppose $K(P_2)$ has one of the forms \eqref{It:Form2}, \eqref{It:Form3}, \eqref{It:Form4}. In each of these cases we can choose a straight segment $\gamma_2 \subset K(P_2)$ such that $P_2$ lies in the interior of $\gamma_2$. Since $K(P_1)$ and $K(P_2)$ are isometric, for $K(P_1)$ there exists a corresponding shortest line $\gamma_1 \subset K(P_1)$ through $P_1$; the curve $\gamma_1$ is isometric to $\gamma_2$. The point $P_1$ breaks $\gamma_1$ into two straight segments $\gamma_1^+$ and $\gamma_1^-$ with $P_1$ being their common boundary point.    

Let $E^3 = \text{span}(\gamma_1^+, \gamma_1^-, \ell)$, where $\ell$ is a ray inside the cone $K(P_1)$ and does not belong to the plane $\text{span}(\gamma_1^+, \gamma_1^-)$. The intersection $K(P_1) \cap E^3$ is a strongly convex cone in $E^3$; for this cone, $\gamma_1$ is the shortest line in this cone, this line passes through $P_1$ and this point lies in the interior of $\gamma_1$. This is a contradiction with the fact that on a strongly convex cone in $E^3$ a shortest line cannot go through the vertex of the cone.

Let us now show that $K(P_1)$ and $K(P_2)$ are congruent, i.e., there exists a motion of the Euclidean space $E^4$ that maps one cone onto the other. Let $S_i^2$, $i \in \{1,2\}$, be the unit spheres with the centers at the points $P_1, P_2$. Then $\tilde F_i^2 = K(P_i) \cap S_i^3$, $i \in \{1,2\}$, are compact convex isometric surfaces in open hemispheres of $S_1^3, S_2^3$. By moving the spheres, we can assume that $\tilde F_1^2$ and $\tilde F_2$ belong to the same spherical space. For them, we can apply the following theorem due to Pogorelov

\begin{MainTheorem}[\cite{Pog}]
\label{Thm:Pog2}
Compact isometric convex surfaces in the spherical space $S^3$ are congruent. 
\end{MainTheorem}
This finishes the proof of Part I of Lemma~\ref{Lem4}.

\medskip
\noindent
II. The proof of Part II is similar to the proof of Part I.
\end{proof}

\begin{lemma}
\label{Lem5}
Let $F_1$ and $F_2$ be a pair of convex isometric hypersurfaces in $E^4$. Suppose that at the point $P_1 \in F_1$ the tangent cone has the form either \eqref{It:Form3} or \eqref{It:Form4} from above. Then the tangent cone $K(P_2)$ at the corresponding under the isometry point $P_2 \in F_2$ has the form \eqref{It:Form3} or \eqref{It:Form4}. The following 3 possibilities can occur:
\begin{enumerate}
\item[a)]
both cones are dihedral angles $K(P_1)=V_1^1 \times E_1^2$, $K(P_2) = V_2^1 \times E_2^2$;
\item[b)]
one tangent cone is a hyperplane, and the other is a dihedral angle;
\item[c)]
both tangent cones are hyperplanes.
\end{enumerate}
\end{lemma}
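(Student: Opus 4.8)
The plan is to deduce Lemma~\ref{Lem5} directly from Lemma~\ref{Lem4}, using two facts already at our disposal: that the four types \eqref{It:Form1}--\eqref{It:Form4} exhaust all tangent cones of a convex hypersurface in $E^4$, and that the intrinsic isometry between $F_1$ and $F_2$ restricts to an isometry of the tangent cones $K(P_1)$ and $K(P_2)$ at corresponding points. Granting these, it is enough to prove that, when $K(P_1)$ is of type \eqref{It:Form3} or \eqref{It:Form4}, the partner cone $K(P_2)$ can be neither of type \eqref{It:Form1} nor of type \eqref{It:Form2}; the configurations (a), (b), (c) are then simply the combinatorial possibilities that remain.

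First I would observe that the hypotheses of Lemma~\ref{Lem4} are symmetric in $F_1$ and $F_2$, so that lemma may be applied to the inverse isometry as well. Part~I then yields that $K(P_2)$ of type \eqref{It:Form1} would force $K(P_1)$ to be of type \eqref{It:Form1}, contrary to assumption; Part~II yields that $K(P_2)$ of type \eqref{It:Form2} would force $K(P_1)$ to be of type \eqref{It:Form2}, again a contradiction. Hence $K(P_2)$ is of type \eqref{It:Form3} or \eqref{It:Form4}. Listing the four ordered possibilities for the pair $\bigl(K(P_1), K(P_2)\bigr)$ and merging the two mixed ones gives exactly the three alternatives (a), (b), (c).

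The conceptual point---and the reason Lemma~\ref{Lem5} stops short of the congruence conclusion of Lemma~\ref{Lem4}---is that both types \eqref{It:Form3} and \eqref{It:Form4} are intrinsically isometric to $E^3$. Indeed, a dihedral angle $V^1 \times E^2$ is flat because the one-dimensional cone $V^1$, consisting of two rays issuing from the vertex, is isometric to a line, so that $V^1 \times E^2 \cong E^1 \times E^2 = E^3$. The intrinsic metric therefore cannot detect which of \eqref{It:Form3}, \eqref{It:Form4} occurs, nor the opening angle of the dihedral angle in case (a); this is exactly why all three configurations genuinely arise and must later be treated separately. I do not expect a serious obstacle in the proof itself: no flatness computation is logically required, since the conclusion is forced by Lemma~\ref{Lem4} together with the exhaustiveness of the classification \eqref{It:Form1}--\eqref{It:Form4}. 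The only step demanding mild care is invoking Lemma~\ref{Lem4} in both directions, which is justified because ``being isometric'' is a symmetric relation between the hypersurfaces.
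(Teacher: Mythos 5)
Your argument is correct, and it is essentially the only reasonable one: the paper itself states Lemma~\ref{Lem5} without proof, treating it as an immediate consequence of the exhaustive classification \eqref{It:Form1}--\eqref{It:Form4} together with Lemma~\ref{Lem4} applied symmetrically (the isometry and its inverse), which is exactly what you do. Your added observation that a dihedral angle $V^1 \times E^2$ is intrinsically flat, so the induced isometry of tangent cones cannot distinguish cases a), b), c), correctly explains why the lemma cannot be strengthened to a congruence statement and why all three configurations must be handled separately later.
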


Let $G_1$, $G_2$ be small neighborhoods of the points $P_1 \in F_1$ and $P_2 \in F_2$, $P_1 = P_2 = P_0$, which satisfy the assumptions of Lemma~\ref{Lem1}. Consider the cones $K(P_1)$ and $K(P_2)$. The following cases can occur:

\begin{enumerate}
\item[I)]
$K(P_1) = V^3$. Then the cones $K(P_1)$ and $K(P_2)$ coincide.
\item[II)]
$K(P_1) = V_1^2 \times E^1_1$. Then the cones $K(P_1)$ and $K(P_2)$ coincide too. By Lemma~\ref{Lem1}, $V_1^2 \subseteq V_2^2$. By isometry of $V_1^2$ and $V_2^2$, we obtain that $V_1^2 = V_1^2$ and the lines $E_1^1$ and $E_2^1$ coincide.
\item[III)]
\begin{enumerate}
\item[a)]
If both tangent cones are dihedral angles, then from Lemma~\ref{Lem1} it follows that the edges $E_1^2, E_2^2$ are corresponding under the isometry and coincide, and one dihedral angle lies inside the other.
\item[b)]
If both tangent cones are hyperplanes, then they coincide.
\item[c)]
If one cone is a hyperplane and the other cone is a dihedral angle, then the argument is similar to case a).
\end{enumerate}
In all cases a)-c) above, the linear combination of the cones at the point $P_0$ is a convex dihedral angle. 
\end{enumerate}
 
Let us treat the cases separately.

\medskip
\noindent
\textbf{I.} $K(P_1) = K(P_2) = V^3$. 

\smallskip
\textbf{I1.} Let $(X_1^n) \subset F_1$ and $(X_2^n) \subset F_2$ be a pair of sequences of corresponding under the isometry cone points, such that $X_1^n \to P_0$ and $X_2^n \to P_0$ as $n \to \infty$. Denote the limiting cones of the cones $K_1(X_1^n)$ and $K_2(X_2^n)$ as $K_1^0$ and $K_2^0$ respectively. By construction, $K_1^0$, $K_2^0$ are isometric supporting cones at $P_0$ to $F_1$ and $F_2$ respectively.

By Lemma~\ref{Lem4}, for each $n$, 
\[
K_1(X_1^n) = A_n K_2(X_2^n) + a_n,
\] 
where $a_n$ is a vector and $A_n$ is an orthogonal matrix. As $n \to \infty$, $a_n \to 0$ and $A_n \to A_0$, where $A_0$ is some orthogonal matrix. Since $K_1^0 = K_2^0$, we obtain $K_1^0 = A_0 K_2^0$, and thus $A_0 = I$ is the identity matrix. For large $n$, the matrices $I + A_n$ are non-degenerate and the convex combination of cones $K_1(X_1^n)$ and $K_2(X_2^n)$ is the cone $K(X^n) = (I + A_n) \cdot K_2(X_2^n) + a_n$. We obtain that $K(X^n)$ is a non-degenerate affine image of $K(X_2^n)$, and hence is convex.

\smallskip
\textbf{I2.} Let $K_1(X_1^n) = V_1^2(n) + E_1^1(n)$, $K_2(X_2^n) = V_2^2(n) + E_2^1(n)$. If $K_1^0 = V_1^2 \times E_1^1$ and $K_2^0 = V_2^2 \times E_2^1$, then the isometric directions $\ell_1^0 \in V_1^2$, $\ell_2^0 \in V_2^2$ belong to the tangent cones $K(P_1) = K(P_2)$. It follows that $K_1^0 = K_2^0$, $V_1^2 = V_2^2$, $E_1^1 = E_2^1$. The curvature of $V_1^2$ is greater than some $\alpha_0 > 0$. We obtained that the angles between any pair of isometric directions in the cones $V_1^2(n)$, $V_2^2(n)$ is less than $\epsilon(n)$, where $\epsilon(n) \to 0$ as $n \to \infty$, the curvature at the vertices are at least $\theta_0 > 0$, and the ball $\omega$ belongs to the both cones. We will now show that for sufficiently big $n$ the convex combination of cones $K_1(X_1^n)$ and $K_2(X_2^n)$ is again a convex cone. It is enough to prove that the cone
\[
K(X^n) = K_1(X_1^n) + K_2(X_2^n)
\]  
is locally convex. For this we need to show that through every $2$-dimensional generator $t_0$ of $K(X^n)$ it is possible to draw a hyperplane such that all generators close to $t_0$ lies in the halfspace that contains the ball $\omega$. Assume the contrary, i.e., for each $n$ there exists a generator $t_0^n$ that does not satisfy the locally convex condition. Let $t_1^n \in K_1(X_1^n)$ be the corresponding generator of $K_1(X_1^n)$. The sequence of generators $t_1^n$ converges to the generator $t_1^0$ of the convex cone $K_1^0$. Each generator $t_1^n$ is a metric product of generators $\ell_1^n \in V_1^2(n)$ and $E_1^1(n)$. Let $A_1^n$ be the point on $\ell_1^n$ at distance $1$ from the vertex of the cone $V_1^2(n)$, and let $\mathcal D_1^n$ be the tangent dihedral angle at the point $A_1^n$ for the cone $K_1(X_1^n)$. We define the same objects $t_2^n$, $\ell_2^n$, $A_2^n$, $\mathcal D_2^n$ for the cone $K_2(X_2^n)$. The $2$-dimensional edges of $\mathcal D_1^n$ and $\mathcal D_2^n$ are corresponding under the isometry. The convex combination of $\mathcal D_1^n$ and $\mathcal D_2^n$ for sufficiently big $n$ is a dihedral angle $\mathcal D^n$. By construction, the ball $\omega$ is inside $\mathcal D^n$. There exists a supporting hyperplane to $\mathcal D^n$, which we call $\Pi$, that passes through the edge of $\mathcal D^n$. Now, we follow Pogorelov's proof \cite[Lemma 1, pp.137-136]{Pog}. 

Let $\bar n$ be the normal to $\Pi$. When moved to a point $A_i^n$, $i \in \{1,2\}$, the vector $\bar n$ points inside the cone $K_i(X_i^n)$. Connect the point $A_1^n$ with the shortest line $\gamma_1^n$ to a point $B_1^n \in V_1^2(n)$ near $A_1^n$. Let $r_1(s)$ be the radius-vector of $\gamma_1^n$, where $s$ is the arc-length parameter on $\gamma_1^n$ chosen so that $s=0$ corresponds to the point $A_1^n$. And let $r_2(s)$ be the radius vector of the corresponding under the isometry shortest line $\gamma_2^n \subset V_2^2(n)$. If $s=0$, 
\begin{equation}
\label{Eq:22}
\frac{d}{ds} \left<r_1 + r_2, \bar n\right> \ge 0.
\end{equation}

By Liberman's theorem \cite[p.~58]{Pog}, the inequality \ref{Eq:22} is true for all $s$ along the shortest line $\gamma_1^n$. Integrating this inequality, we obtain that all points of the cone $K(X^n)$ close to the image the point $A_1^n$ lie from one side with respect to the supporting hyperplane with the inner normal $\bar n$. This implies that the cone $K(X^n)$ is locally convex.

We take a small neighborhood of the point $P_0 = P_1 = P_2$ in the hypersurfaces $F_1$ and $F_2$. Let $F$ be the convex combination of $F_1$ and $F_2$. The radius-vector of $F$ is $r = (r_1 + r_2)/2$, where $r_i$ is the radius vector of $F_i$. From above it follows that there exists a neighbourhood of the point $P_0 \in F$ such that $F$ is a convex hypersurface. The vector field $\sigma = r_1 - r_2$ is an infinitesimal bending vector field of $F$, i.e., $\left<dr, d\sigma\right> = 0$. After this we follow Senkin's original proof. This proves the uniqueness theorem for compact convex hypersurfaces in the Euclidean space $E^4$.  

\smallskip
\textbf{I3.} Let 
\begin{enumerate}
\item[a)] 
$K_1(X_1^n) = V_1^1(n) \times E_1^2(n)$, $K_2(X_2^n) = V_2^1(n) \times E_1^2(n)$.
\item[b)]
$K_1(X_1^n) = V_1^1(n) \times E_1^2(n)$, $K_2(X_2^n) = E^3(n)$.
\item[c)]
$K_1(X_1^n) = E_1^3(n)$, $K_2(X_2^n) = E_2^3(n)$.
\end{enumerate}
At these, the dihedral angles or support hyperplanes are equal. The convex combination of the cones $K_1(X_1^n)$, $K_2(X_2^n)$ are, correspondingly,
\begin{enumerate}
\item[a)]
a hyperplane;
\item[b)]
a dihedral angle;
\item[c)]
a convex cone.
\end{enumerate}

We proved that there exist neighborhoods of the point $P_0$ in $F_1$ and $F_2$ such that the convex combination of $F_1$ and $F_2$ within those neighborhoods is a convex hypersurface $F$.   

\medskip
\noindent
\textbf{II.} $K_1(P_1) = K_2(P_2) = V_2 \times E^1$.

\medskip
\noindent
\textbf{III.} a) If $K_1(P_1) = E^3$, $K_2(P_2) = E^3$, then $K_1^0$ and $K_2^0$ coincide the tangent hyperplane. 

b) If $K_1(P_1) = V_1^1 \times E_1^2$, $K_2(P_2) = V_2^1 \times E_2^2$, then the edges $E_1^2$, $E_2^2$ coincide and correspond under the isometry of the cones, and one dihedral angle lies inside another. In this case, the cones $K_1^0$ and $K_2^0$ are contained inside the cones $K_1(P_1)$ and $K_2(P_2)$. Similar to case \textbf{I}, we can prove that there exist neighborhoods of the points $P_1 \in F_1$ and $P_2 \in F_2$ such that the convex combination of $F_1$ and $F_2$ is the convex surface $F$.

\section{Proof of Theorem~\ref{Thm:Main}}

In this section we will prove the uniqueness of compact convex isometric hypersurfaces in $E^{4}$ without assumption on regularity (Theorem~\ref{Thm:Main}).

We will need a concept of \emph{Pogorelov map} \cite{Pog}. Let $F_1$ and $F_2$ be compact isometric convex hypersurfaces in the open hemisphere of the spherical space $S^n \subset E^{n+1}$. Let $x^0, x^1, \ldots, x^n$ be the Cartesian orthogonal coordinates in $E^{n+1}$ and $S^n$ is a sphere centered at the origin. We assume that $F_1$ and $F_2$ have the same orientation and belong to the hemisphere $x^0 > 0$. Let $r_1, r_2$ be the radius vectors of $F_1, F_2$ with the same parameters at the isometric points. Finally, let $\Phi_1$, $\Phi_2$ be the hypersurfaces in $E^{n}$ defined by the radius-vectors
\[
R_1 := \frac{r_1 - e_0 \left<r_1, e_0\right>}{\left<e_0, r_1 + r_2\right>}, \quad R_2 := \frac{r_2 - e_0 \left<r_2, e_0\right>}{\left<e_0, r_1 + r_2\right>},
\]
where $e_0$ is the unit coordinate vector corresponding to $x^0$. For $n = 4$, A.~V.~Pogorelov proved that $\Phi_1, \Phi_2$ are compact convex isometric hypersurfaces in $E^4$. This result is true for any $n$ and the proof is similar to that of Pogorelov's for $n=4$. For $n=4$, the uniqueness theorem in $S^4$ follows from the uniqueness theorem in $E^4$.

Now we prove Theorem~\ref{Thm:Main} in the hyperbolic space $\mathbb H^n$, $n=4$. In 1980, A.~D.~Milka proved Theorem~\ref{Thm:Main} in $\mathbb H^3$ \cite{Mil}. He used E.~P.~Senkin's idea of the proof of Theorem~\ref{Thm:Sen}. It is possible to move the surfaces $F_1$ and $F_2$ in such a way that they satisfy Lemma~\ref{Lem1}. This means that from some point $O$ one can see $F_1$ and $F_2$ from the same side and do Carmo and Warner proved uniqueness of compact regular convex hypersurfaces in $S^n$ \cite{dCW}. Gorsij generalized this theorem to compact convex isometric $C^1$-smooth hypersurfaces in $S^n$ \cite{Gor}. It was also proven that the images of convex isometric hypersurfaces in $S^n$ under the Pogorelov map are convex hypersurfaces in the Euclidean space $E^n$ is the hypersurfaces in the sphere can be seen from the convexity side \cite{Pog}. Milka proved an analogous result for convex isometric hypersurfaces in the hyperbolic space under the condition that it is possible to see convex isometric hypersurfaces from different convexity sides.

\begin{proof}[Proof of Theorem~\ref{Thm:Main}]
The proof is by induction on the dimension $n$. Suppose we have proven the result for $E^n, S^n, \mathbb H^n$. Let us show how to prove it for $E^{n+1}, S^{n+1}, \mathbb H^{n+1}$. 

For the convex hypersurfaces $F_s^n \subset E^{n+1}$, $s \in \{1,2\}$, the tangent convex cones are of the form
\[
K = V^{n-i} \times E^i, \quad i \in \{0, \ldots, n\}, 
\]
where $V^{n-i}$ is a strongly convex cone in $E^{n-i+1}$.

1) Let $K_1 = V_1^{n-i} \times E_1^i$ ($i \le n-3$) be a convex cone in $E^{n+1}$, $K_2$ be an isometric convex cone in $E^{n+1}$. Then $K_2 = V_2^{n-i} \times E_2^i$ is congruent to $K_1$. The proof follows the same way as we proved Lemma~\ref{Lem4} and we use the uniqueness of compact isometric convex hypersurfaces in $E^{n-1}$, $S^{n-1}$.

2) Let $K_1 = V_1^2 \times E_1^{n-2}$, and $K_2$ be an isometric convex cone in $E^{n+1}$. Then $K_2 = V_2^2 \times E_2^{n-2}$. The cones $K_1, K_2 \subset E^3$ are convex isometric cones.

3) Let $K_1 = V_1^1 \times E_1^{n-1}$, then $K_2 = V_2^1 \times E_2^{n-1}$ or $K_2 = E^n$. 

We prove Theorem~\ref{Thm:Main} similarly to the proof for $E^4, S^4, \mathbb H^4$ by induction.  
\end{proof}

\end{document}